\tikzstyle{block}=[draw opacity=0.7,line width=1.4cm]
\nonstopmode\numberwithin{equation}{section}
\newtheorem{conj}{Conjecture}
 \theoremstyle{plain}
\newtheorem{prop}{Proposition}
\begin{document}
\title{ A note on an Inverse problem in Algebra}
\maketitle
\begin{center}
\center{$ \text{Gaurav Mittal}$} 
{\footnotesize
 \center{ Department of Mathematics, Indian Institute of Technology Roorkee, Roorkee, India
 
 \email {gmittal@ma.iitr.ac.in} }
}

\smallskip
\begin{abstract}   
In this paper, we discuss the inverse problem of determining a semisimple group algebra from the knowledge of rings of the type $\oplus_{t=1}^jM_{n_t}(\mathbb{F}_t)$, where $j$ is an arbitrary integer and $\mathbb{F}_t$ is  finite field for each $t$, and show that it is ill-posed. After then, we   define the concept of completeness of the rings of the type $\oplus_{t=1}^jM_{n_t}(\mathbb{F}_t)$  to pose a well-posed inverse problem  and propose a  conjecture  in this direction.
\end{abstract}
\end{center}
\subjclass \textbf{Mathematics Subject Classification (2010)}: {16U60, 20C05.}

\keywords{\textbf{Keywords:}  Group algebra, Wedderburn decomposition
\medskip
\section{Main result}
For semisimple group algebras, determination of Wedderburn decomposition $[1]$ is a classical problem which is further incorporated to obtain the unit groups.  Mathematically,  if $\mathbb{F}_qG$ is the finite semisimple group algebra over a field with $q=p^k$ elements for some prime $p$, then $$\mathbb{F}_qG\cong \oplus_{t=1}^jM_{n_t}(\mathbb{F}_t), \quad \text{and}\quad U(\mathbb{F}_qG)\cong \oplus_{t=1}^jGL_{n_t}(\mathbb{F}_t) $$ where for each $t$, $\mathbb{F}_t$ is a finite extension of $\mathbb{F}_q$ and $n_t, j \in \mathbb{Z}$. Units of group rings play an important role in various fields of mathematics including coding theory $[7]$,  cryptography $[8]$  etc.

Let $O_1$ and $O_2$ be two sets where $O_1$ is the set of all finite semisimple group algebras and $O_2$ is the set of all rings of the type $\oplus_{t=1}^jM_{n_t}(\mathbb{F}_t)$ where $j$ is arbitrary integer and $\mathbb{F}_t$ is arbitrary finite field for each $t$(here we assume that all the finite fields $\mathbb{F}_t$ for each $t$ have same characteristic). Let $\psi: O_1\to O_2$ is the direct map which maps each semisimple group algebra to its unique Wedderburn decomposition. Many researchers have devoted a lot of effort to determine the Wedderburn decomposition of  semisimple group algebras, for instance $[2, 3, 4, 5,  6 ]$ etc. However, inverse of $\psi$, i.e. corresponding to a  ring of the type $\oplus_{t=1}^jM_{n_t}(\mathbb{F}_t)$, do there exist a semisimple group algebra which is mapped to $\oplus_{t=1}^jM_{n_t}(\mathbb{F}_t)$ by $\psi$, is less clear.   From $[5, \ \text{Lemma}\ 2.5]$, we know that $\mathbb{F}_q$ is always one of the  component in  Wedderburn decomposition of the semisimple group algebra $\mathbb{F}_qG$. Therefore, we conclude that the above mentioned inverse problem is ill-posed as for example, all the rings of the type $\oplus_{t=1}^jM_{n_t}(\mathbb{F}_t)$ where $n_t>1$ has no inverse image under $\psi$. Recall that a problem is ill-posed if it has either no solution or more than one solutions (there is also a third condition  related to continuity of $\psi$, but we are not concerned about it).

Given a ring of the type $\oplus_{t=1}^jM_{n_t}(\mathbb{F}_t)$, we say it to be complete (or Wedderburn decomposition),  if there exists a group algebra $\mathbb{F}_qG$  such that upon addition of some  simple components in $\oplus_{t=1}^jM_{n_t}(\mathbb{F}_t)$, it becomes image of $\mathbb{F}_qG$ under $\psi$ where $\mathbb{F}_qG$  is smallest such group algebra. For example, there is no group algebra $\mathbb{F}_qG$  such that $\psi(\mathbb{F}_qG)= M_2(\mathbb{F}_q)^3\oplus M_3(\mathbb{F}_q)$, where $q=p^n$ and $p\geq 5$ is a prime. However, if we add $\mathbb{F}_q^3$ to $M_2(\mathbb{F}_q)^3\oplus M_3(\mathbb{F}_q)$, it becomes complete as for $G=SL(2, 3)$, $\psi(\mathbb{F}_qG)= \mathbb{F}_q^3\oplus M_2(\mathbb{F}_q)^3\oplus M_3(\mathbb{F}_q)$ $[5]$. 
Next, we conjecture a result about the completeness of a ring which   is also the main result of the paper.
\begin{conj}
Let $p$ is  some prime and $j \in \mathbb{Z}$. Then, every ring of the type $\oplus_{t=1}^jM_{n_t}(\mathbb{F}_t)$, where $\mathbb{F}_t$ for each $t$ is a finite field of characteristics $p$,  is complete.
\end{conj}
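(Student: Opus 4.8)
The plan is to reduce the completeness of an arbitrary ring $R=\oplus_{t=1}^{j}M_{n_t}(\mathbb{F}_t)$ to the realizability of its individual blocks, and then to assemble the blocks by a direct product of groups. Throughout I take the base field to be the prime field $\mathbb{F}_p$, which is legitimate since every $\mathbb{F}_t$ has characteristic $p$ and hence is an extension of $\mathbb{F}_p$; write $\mathbb{F}_t=\mathbb{F}_{p^{e_t}}$. The key tool is that for $G=G_1\times\cdots\times G_j$ one has $\mathbb{F}_pG\cong\mathbb{F}_pG_1\otimes_{\mathbb{F}_p}\cdots\otimes_{\mathbb{F}_p}\mathbb{F}_pG_j$, and that the Wedderburn decomposition of a tensor product of semisimple algebras is the direct sum of the pairwise tensor products of their simple components. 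Since by $[5,\ \text{Lemma }2.5]$ the block $\mathbb{F}_p$ occurs in every $\mathbb{F}_pG_t$, and since $A\otimes_{\mathbb{F}_p}\mathbb{F}_p\cong A$ for any component $A$, choosing a distinguished block in one tensor slot and the trivial block $\mathbb{F}_p$ in all the others shows that every simple component of every $\mathbb{F}_pG_t$ reappears verbatim as a simple component of $\mathbb{F}_pG$. Thus if each block $M_{n_t}(\mathbb{F}_{p^{e_t}})$ can be realized as a simple component of some $\mathbb{F}_pG_t$, then all of them appear simultaneously in $\psi(\mathbb{F}_pG)$; the remaining cross components are exactly the ``added simple components'' permitted by the definition, so $R$ is complete. (Minimality of the completing algebra then follows by well-ordering and is not needed for existence.)

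It therefore suffices to realize a single block $M_n(\mathbb{F}_{p^e})$, and I would separate the matrix size from the field. First, the field is free: the cyclic group $C_{p^e-1}$ satisfies $\mathrm{ord}_{p^e-1}(p)=e$, so $\mathbb{F}_pC_{p^e-1}=\mathbb{F}_p[x]/(x^{p^e-1}-1)$ contains $\mathbb{F}_{p^e}$ as a simple component (the factor coming from the primitive $(p^e-1)$-th roots of unity). Next, to produce the matrix size over the prime field I realize $M_n(\mathbb{F}_p)$ by a Frobenius group: choose a prime $\ell$ with $\mathrm{ord}_\ell(p)=n$ and let $C_n=\langle r\rangle$ act on $C_\ell$ through the order-$n$ automorphism $x\mapsto x^{p}$, so that $G_0=C_\ell\rtimes C_n$ has order coprime to $p$. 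Because the generator of $C_n$ permutes the nontrivial characters of $C_\ell$ exactly as the Frobenius $x\mapsto x^{p}$, the nontrivial part of $\mathbb{F}_pC_\ell$ is a sum of copies of $\mathbb{F}_{p^n}$ on each of which $C_n$ acts as the full Galois group $\mathrm{Gal}(\mathbb{F}_{p^n}/\mathbb{F}_p)$; by the crossed-product (Galois descent) isomorphism $L\#\mathrm{Gal}(L/K)\cong M_{[L:K]}(K)$, each summand contributes a block $M_n(\mathbb{F}_p)$, whence $\mathbb{F}_pG_0\cong\mathbb{F}_pC_n\oplus M_n(\mathbb{F}_p)^{(\ell-1)/n}$. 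Finally I combine size and field by taking $G_t=G_0\times C_{p^{e_t}-1}$ and using $M_{n_t}(\mathbb{F}_p)\otimes_{\mathbb{F}_p}\mathbb{F}_{p^{e_t}}\cong M_{n_t}(\mathbb{F}_{p^{e_t}})$.

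The main obstacle is arithmetic, and it is where the statement must be qualified. The matrix size of any simple component of a semisimple $\mathbb{F}_qG$ equals the degree of an absolutely irreducible representation, which divides $|G|$; since semisimplicity forces $p\nmid|G|$, every $n_t$ occurring in a genuine Wedderburn decomposition must satisfy $\gcd(n_t,p)=1$. Hence the conjecture as stated is false whenever some $n_t$ is divisible by $p$ (for instance $M_2(\mathbb{F}_2)$ in characteristic $2$, since $2\nmid|G|$ forces all representation degrees to be odd), and the hypothesis that should be added is $\gcd(n_t,p)=1$ for all $t$. Granting this, the construction goes through, modulo one secondary point: the Frobenius-group step needs a prime $\ell$ with $\mathrm{ord}_\ell(p)=n$, i.e. a primitive prime divisor of $p^{n}-1$, which exists by Zsygmondy's theorem except for $n\le 2$ and the pair $(p,n)=(2,6)$. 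The case $(2,6)$ violates $\gcd(n,p)=1$ and is excluded, $n=1$ is the trivial block $\mathbb{F}_p$, and the remaining exceptional case $n=2$ (with $p$ a Mersenne prime) I would handle uniformly by the quaternion group $Q_8$, whose unique $2$-dimensional representation is rational and gives $\mathbb{F}_pQ_8\cong\mathbb{F}_p^{4}\oplus M_2(\mathbb{F}_p)$ for every odd $p$. With these finitely many exceptions disposed of, every block with $p\nmid n_t$ is realizable, and the direct-product assembly completes the proof of the corrected statement.
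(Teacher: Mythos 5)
There is no proof in the paper to compare against: the statement is posed as an open conjecture, and the author explicitly says no proof is currently available. Your proposal therefore goes beyond the paper, and its central observation is correct and important: the conjecture is \emph{false} as stated. If $M_{n_t}(\mathbb{F}_t)$ is to occur in $\psi(\mathbb{F}_qG)$ then $\mathbb{F}_t$ must contain $\mathbb{F}_q$, semisimplicity forces $p\nmid |G|$, the Schur index over a finite field is $1$ so $n_t$ is the degree of an absolutely irreducible representation, and Frobenius' divisibility theorem gives $n_t\mid |G|$; hence $p\nmid n_t$. So a block such as $M_2(\mathbb{F}_2)$ in characteristic $2$ can never be completed, and $\gcd(n_t,p)=1$ is a necessary hypothesis the paper omits.

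Your proof of the corrected statement is essentially sound. The reduction via $\mathbb{F}_p(G_1\times\cdots\times G_j)\cong\bigotimes_t\mathbb{F}_pG_t$, together with the presence of the augmentation component $\mathbb{F}_p$ in each factor, correctly shows it suffices to realize each block separately, and it handles repeated blocks since distinct slots contribute distinct summands. The three building blocks check out: $\mathbb{F}_pC_{p^e-1}$ contains $\mathbb{F}_{p^e}$ because $\mathrm{ord}_{p^e-1}(p)=e$; for $\ell$ a primitive prime divisor of $p^n-1$ the decomposition $\mathbb{F}_p(C_\ell\rtimes C_n)\cong\mathbb{F}_pC_n\oplus M_n(\mathbb{F}_p)^{(\ell-1)/n}$ holds because the $C_n$-action fixes each rational idempotent of $\mathbb{F}_pC_\ell$ and acts on the corresponding copy of $\mathbb{F}_{p^n}$ as the full Galois group; and $M_n(\mathbb{F}_p)\otimes_{\mathbb{F}_p}\mathbb{F}_{p^e}\cong M_n(\mathbb{F}_{p^e})$. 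The Zsigmondy exceptions are correctly disposed of: $n=1$ is trivial, $n=2$ is covered by $\mathbb{F}_pQ_8\cong\mathbb{F}_p^4\oplus M_2(\mathbb{F}_p)$ for all odd $p$, and $(p,n)=(2,6)$ is excluded by $\gcd(n,p)=1$. The only steps worth writing out in full in a final version are the Schur-index-one point in the necessity argument and the fact that the crossed product arising from the split extension $C_\ell\rtimes C_n$ has trivial cocycle, so the Galois-descent isomorphism applies. In short: you have disproved the conjecture as stated and proved the natural corrected version; the paper itself offers neither.
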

If above conjecture is true, then our inverse problem becomes well-posed in the sense of existence of solution. However, there may exist more than one completeness of a single ring. Currently, we do not have any proof for above-said conjecture, but to prove it one need to look for a group algebra $\mathbb{F}_qG$ for some $q$ such that $\psi(\mathbb{F}_qG)= \oplus_{t=1}^sM_{n_t}(\mathbb{F}_t)$ where $ \oplus_{t=1}^sM_{n_t}(\mathbb{F}_t)$ is completeness of  $\oplus_{t=1}^jM_{n_t}(\mathbb{F}_t)$ . Next result can be very helpful in the determination of such group algebra.
\begin{prop}
Let $\mathbb{F}_qG$ is  a semisimple group algebra for some group $G$ where $q=p^k, k \geq 1$ and   $p$ is some prime. Further, let  $\oplus_{t=1}^jM_{n_t}(\mathbb{F}_t)$ is a ring with $j \in \mathbb{Z}$ and $n_t\geq 1$ for each $t$. Then $\oplus_{t=1}^jM_{n_t}(\mathbb{F}_t)$ is Wedderburn decomposition of $\mathbb{F}_qG$ provided all of the following hold:
\begin{enumerate}
\item for each $t$, $\mathbb{F}_t$ is a finite extension of $\mathbb{F}_q$.
\item if $G'$ is commutator subgroup of $G$, then  $\mathbb{F}_q(G/G')$ is isomorphic to sum of all commutative components of $\oplus_{t=1}^jM_{n_t}(\mathbb{F}_t)$.
\item this result tells us about $j$ in $\oplus_{t=1}^jM_{n_t}(\mathbb{F}_t)$.
Let $g \in G$ be a $p$-regular element and    the sum of all conjugates of $g$ be denoted by $\gamma_g$. Further, let the cyclotomic $\mathbb{F}_q$-class of $\gamma_g$ be denoted by $$S(\gamma_g) = \{ \gamma_{g^n} \ |\  n \in I_{\mathbb{F}}\}$$ where $$I_{\mathbb{F}} = \{ n \ |\ \zeta \mapsto \zeta^n \ \text{is an automorphism of }  \  \mathbb{F}_q(\zeta) \ \text{over}\ \mathbb{F}_q\},$$ $\zeta$ is primitive $e^{\text{th}}$ root of unity,  $e$ is exponent of $G$. Then, the number of components in  the ring $\oplus_{t=1}^jM_{n_t}(\mathbb{F}_t)$ is equal to number of cyclotomic $\mathbb{F}_q$-classes in $G$.
\item this result tells us about $\mathbb{F}_t$ in $\oplus_{t=1}^jM_{n_t}(\mathbb{F}_t)$. Let $\text{Gal}(\mathbb{F}_q(\zeta)/\mathbb{F}_q)$ be  cyclic and  $j$ be the number of cyclotomic $\mathbb{F}_q$-classes in $G$. If $K_i, 1 \leq i \leq j$, are the simple components of center of $\mathbb{F}_qG$ and $S_i, 1 \leq i \leq j$, are the cyclotomic $\mathbb{F}_q$-classes in $G$, then $|S_i| = [K_i:\mathbb{F}_q]$ for each $i$ after suitable ordering of the indices if required. 
\end{enumerate}
\end{prop}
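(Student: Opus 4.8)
The plan is to treat the four items as the structural constraints that the genuine Wedderburn decomposition of a finite semisimple group algebra must satisfy, and to verify each in turn. As a starting point, semisimplicity of $\mathbb{F}_qG$ (equivalently $p\nmid |G|$, by Maschke's theorem) lets the Wedderburn--Artin theorem write $\mathbb{F}_qG\cong\oplus_{t=1}^jM_{n_t}(D_t)$ for division rings $D_t$, and Wedderburn's little theorem (every finite division ring is a field) forces each $D_t$ to be a finite field $\mathbb{F}_t$. This already fixes the shape $\oplus_{t=1}^jM_{n_t}(\mathbb{F}_t)$, so the proposition reduces to pinning down the three data: the number $j$ of components, the fields $\mathbb{F}_t$, and how the commutative part is organized.

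Items (1) and (2) I would handle by elementary means. For (1), the center $Z(\mathbb{F}_qG)$ contains the scalars $\mathbb{F}_q\cdot 1$, and under the isomorphism it maps isomorphically onto $\oplus_{t=1}^jZ(M_{n_t}(\mathbb{F}_t))=\oplus_{t=1}^j\mathbb{F}_t$; tracing the image of $1$ shows each $\mathbb{F}_t$ contains a copy of $\mathbb{F}_q$, so $\mathbb{F}_t/\mathbb{F}_q$ is a finite extension. For (2), the algebra epimorphism $\mathbb{F}_qG\to\mathbb{F}_q(G/G')$ induced by $G\to G/G'$ realizes $\mathbb{F}_q(G/G')$ as the maximal commutative quotient of $\mathbb{F}_qG$: any commutative quotient kills the image of every commutator, hence factors through $G/G'$. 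Since a quotient of a semisimple algebra is the sum of a subset of its simple components, and such a sum is commutative precisely when every included component has $n_t=1$, the maximal commutative quotient equals the sum of the components $M_1(\mathbb{F}_t)=\mathbb{F}_t$. Comparing the two descriptions gives the claimed isomorphism.

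Items (3) and (4) are where I expect the real work, and I would lean on the theory of cyclotomic $\mathbb{F}_q$-classes. For (3), I would invoke the Berman--Witt type theorem (developed for finite fields by Broche and del R\'io) establishing a bijection between the primitive central idempotents of $\mathbb{F}_qG$---equivalently the simple components---and the cyclotomic $\mathbb{F}_q$-classes of $p$-regular elements described in the statement; semisimplicity makes every element $p$-regular, so $j$ equals the number of cyclotomic $\mathbb{F}_q$-classes. For (4), I would first note that the cyclicity hypothesis is automatic, since the Galois group of any extension of finite fields is cyclic, generated by Frobenius $\zeta\mapsto\zeta^q$. The content is then the degree formula $|S_i|=[K_i:\mathbb{F}_q]$: analyzing the Frobenius action on the irreducible characters (over $\overline{\mathbb{F}_q}$) attached to $S_i$, the orbit has length $|S_i|$, and this orbit length equals the degree over $\mathbb{F}_q$ of the field of character values, which is exactly the center $K_i$ of the corresponding component.

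The main obstacle is precisely the bijection and the degree formula underlying (3) and (4): proving them from scratch requires the Brauer-character and Galois-descent machinery relating $\mathbb{F}_q$-classes, primitive central idempotents, and the action of $\text{Gal}(\mathbb{F}_q(\zeta)/\mathbb{F}_q)$ on the absolutely irreducible characters. I would cite these results rather than reprove them, so that the proposition reduces to the routine verifications of (1) and (2) together with an assembly of these known structural facts into a single checklist usable for the inverse problem.
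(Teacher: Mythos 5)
Your proposal is correct and follows essentially the same route as the paper: the paper dismisses (1) as trivial, cites Milies--Sehgal (Proposition 3.6.11) for the commutative-quotient description in (2), and cites Ferraz's results on cyclotomic $\mathbb{F}_q$-classes for (3) and (4), which are exactly the three ingredients you identify (your write-up merely spells out the elementary verifications that the paper leaves to references).
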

\begin{proof}
Part $(1)$ is trivial. For part $(2)$, see $[1, \ \text{proposition}\ 3.6.11]$. For part $(3)$ and $(4)$, see $[9]$.
\end{proof}
\section{Discussion}
As already discussed, units of a  group algebra have several applications. If Conjecture $1$ is true, then we can take a ring of $\oplus_{t=1}^jM_{n_t}(\mathbb{F}_t)$ of our choice and employ the units of completed ring in various applications. Especially in the field of cryptography, availability of large number of variety of units can be help in proposing new cryptosystems or improve the security of exisiting cryptosystems on group rings $[8]$. Further, if Conjecture $1$ is true, then the next important task is to measure the hardness of completeness problem, i.e. how hard it is to find the completeness of a given ring.
\bibliographystyle{plain}

\begin{thebibliography}{10}
\small{
\bibitem {}{ C. P. Milies, S.K. Sehgal,} \newblock{An introduction to group rings, Kluwer Acad. Pub.,  $2002$}.

\bibitem {}{ S. Gupta,  S. Maheshwary,} \newblock{Finite semisimple group algebra of a normally monomial group, Int. J. Algebr. Comput., $29(1)$, $159$-$177$,  $2019$}.


\bibitem {}{ G. K. Bakshi, S. Gupta,  I. B. S. Passi,} \newblock{The algebraic structure of finite metabelian group algebras, Comm. Algebra,  $43(6)$, $2240$-$2257$,  $2015$}.

\bibitem {}{N. Makhijani, R. K. Sharma, J. B. Srivastava,} \newblock{A note on the structure of $\mathbb{F}_{p^k}A_5/J(\mathbb{F}_{p^k}A_5)$, Acta Sci. Math. (Szeged)  $82$,  $29$-$43$, $2016$.}

\bibitem {}{S. Maheshwari, R. K. Sharma,} \newblock{The unit group of group algebra $\mathbb{F}_qSL(2, \mathbb{Z}_3)$, J. Algebra Comb. Discrete Appl. $3(1)$, $1$-$6$, $2016$.}


\bibitem {}{M. Khan, R. K. Sharma, J. B. Srivastava,} \newblock{The unit group of $FS_4$, Acta  Math. Hungar., $118(2)$,   $105$-$113$, $2007$.}

\bibitem {}{T. Hurley,} \newblock{Convolutional codes from units in matrix and group rings, Int. J. Pure Appl. Math., $50(3)$,   $431$-$463$, $2009$.}

\bibitem {}{G. Mittal, S. Kumar, S. Narain, S. Kumar} \newblock{Group ring based ElGamal type public key cryptosystems, arxiv.org/abs/1909.07262, $2019$.}

\bibitem {}{ R. A. Ferraz,} \newblock{Simple components of the center of $\mathbb{F}G/J(\mathbb{F}G)$,   Comm. Algebra, \textbf{36}(9), (2008), 3191-3199}.

 
\small}

 \end{thebibliography}

\end{document}